\newtheorem{thm}{Theorem}[section]
\newtheorem{lem}[thm]{Lemma}
\newtheorem{fact}[thm]{Fact}
\newtheorem{cor}[thm]{Corollary}
\theoremstyle{definition}
\newtheorem{defn}[thm]{Definition}
\theoremstyle{remark}
\newcommand{\GP}[2]{\langle\,#1\,\Vert\,#2\,\rangle}
\newcommand{\Th}{\operatorname{Th}}
\providecommand{\abs}[1]{\lvert#1\rvert}
\newcommand{\N}{\mathbb N}
\newcommand{\SOP}{\mathrm{SOP}}
\title{Independence Property and\\ Hyperbolic Groups}
\author{Eric Jaligot, Alexey Muranov, and Azadeh Neman}
\date{\today}
\address{Universit\'e de Lyon ;
Universit\'e Lyon 1 ;
Institut Camille Jordan CNRS UMR 5208 ;
43, boulevard du 11 novembre 1918,
F--69622 Villeurbanne Cedex, France}
\subjclass[2000]{Primary 03C45; Secondary 20F06, 20F67}
\begin{document}

\maketitle

\begin{abstract}
In continuation of \cite{HoucineJaligot04,Houcine2007},
we prove that existentially closed $CSA$-groups have 
the independence property.
This is done by showing that there exist words having
the independence property relative to the class of 
torsion-free hyperbolic groups. 
\end{abstract}

\section{Introduction}

A \emph{$CSA$-group} is a group $G$ in which every maximal
abelian subgroup $A$ is malnormal, i.e.\ satisfies $A\cap A^{g}=\{1\}$
for each element $g$ of $G$ which is not in $A$.
The interest in these groups is motivated on the one hand by the question 
of existence of new groups with well behaved first-order theories, 
and ultimately of so-called \emph{bad groups} 
of finite Morley rank which may be $CSA$-groups
under certain circumstances \cite{Jaligot01}.
On the other hand, it is motivated by the fact that $CSA$-groups
are usually more suitable than other groups for solving equations, 
an important step towards understanding their first-order theory.

The torsion of $CSA$-groups under consideration
can be restricted as follows. 
If $f$ is a function from the set of prime integers into 
$\N\sqcup\{\infty\}$, we call a \emph{$CSA_{f}$-group} any $CSA$-group 
which contains no elementary abelian $p$-subgroup of 
rank $f(p)+1$ for any prime $p$ such that $f(p)$ is finite. 
Once such a function $f$ is fixed, it is easily seen that the
first-order class of $CSA_{f}$-groups is inductive,
and thus contains \emph{existentially closed} $CSA_{f}$-groups.
If a $CSA$-group contains an involution, then it must be abelian, 
a case of low interest from the group-theoretic point of view and from 
the model-theoretic point of view as well, since all abelian groups
are known to be stable. Hence we work with $CSA$-groups without 
involutions, i.e.\ with $CSA_{f}$-groups where $f(2)=0$.
It is known, when $f(2)=0$, that existentially closed $CSA_{f}$-groups
are simple, in the strong sense that simplicity is provided
by a first-order formula.
Moreover, their maximal abelian subgroups are 
conjugate, divisible, and of Pr\"ufer $p$-rank $f(p)$ for each prime $p$. 
We refer to \cite[Sect.\ 2, 5]{HoucineJaligot04} for details. 

Concerning the model theory of existentially closed $CSA_{f}$-groups, 
always assuming $f(2)=0$, it has also been proved
in \cite{HoucineJaligot04} that such groups are not $\omega$-stable. 
This was done by counting the number of types, more precisely by showing 
the existence of $2^{\aleph_{0}}$ types over the empty set.
The same method of counting types has later been used in 
\cite{Houcine2007} to prove that the first-order theory of
existentially closed $CSA_{f}$-groups ($f(2)=0$) is not superstable.
We will show here by a much more elementary argument 
that their first-order theory is indeed far from being stable.
We shall prove that 
they have the independence property using a standard small cancellation 
argument from combinatorial group theory.

\section{Relative independence property} 

The main result of the present paper leading to the independence property of 
existentially closed $CSA$-groups is the following.

\begin{thm}
\label{MainTheo}
There exists a group word $w(x,y)$ in two variables such that the formula
$\ulcorner w(x,y)=1\urcorner$ has the independence property
relative to the class of torsion-free hyperbolic groups. 
\end{thm}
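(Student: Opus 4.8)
The plan is to realize every possible trace by a small cancellation quotient of a free group, exploiting the classical fact that a finite $C'(1/6)$-presentation whose relators are cyclically reduced and not proper powers defines a torsion-free hyperbolic group, together with Greendlinger's Lemma. Fix once and for all a sufficiently large integer $k$ and set
\[
  w(x,y) \;=\; \prod_{j=1}^{k} x^{j}\,y \;=\; x\,y\,x^{2}\,y\,\cdots\,x^{k}\,y .
\]
I claim this single word works for every $n$. Given $n$, let $F_{n}$ be the free group on the letters $s_{S}$, one for each subset $S\subseteq\{1,\dots,n\}$, together with letters $t_{1},\dots,t_{n}$, and put $b_{i}=t_{i}$ and $a_{S}=s_{S}$. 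Consider the quotient
\[
  G_{n} \;=\; F_{n}\big/ \big\langle\!\big\langle\, w(s_{S},t_{i}) : S\subseteq\{1,\dots,n\},\ i\in S \,\big\rangle\!\big\rangle .
\]
The goal is to check that $G_{n}$ is torsion-free hyperbolic and that, in $G_{n}$, one has $w(a_{S},b_{i})=1$ if and only if $i\in S$. Since distinct $S$ then produce distinct traces on $(b_{1},\dots,b_{n})$, all $2^{n}$ subsets are realized and the formula $\ulcorner w(x,y)=1\urcorner$ shatters $b_{1},\dots,b_{n}$ inside $G_{n}$; producing such a $G_{n}$ for every $n$ is exactly the relative independence property.

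First I would verify that the relator set satisfies $C'(1/6)$ with relators that are not proper powers. Each relator $w(s_{S},t_{i})=s_{S}t_{i}s_{S}^{2}t_{i}\cdots s_{S}^{k}t_{i}$ is a positive word in the two letters $s_{S},t_{i}$; its $s$-syllable lengths $1,2,\dots,k$ are strictly increasing, so it is aperiodic and not a proper power, and its total length is $L=\binom{k+1}{2}+k$. Because every relator is a positive word, no relator shares a nontrivial common subword with the inverse of any relator, so pieces arise only among the positive words themselves. Two relators can share a letter only when they share the index $S$ (through $s_{S}$) or the index $i$ (through $t_{i}$); a maximal common subword can therefore not cross a syllable boundary at which the two relators disagree, and one checks that every piece lies inside a single $s_{S}$-block and so has length at most $k$. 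Hence every piece $p$ occurring in a relator $r$ satisfies $\abs{p}\le k<\tfrac{1}{6}L=\abs{r}/6$, which is the $C'(1/6)$ condition. Consequently $G_{n}$ is torsion-free hyperbolic, as required.

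It remains to pin down the traces. For $i\in S$ the equality $w(a_{S},b_{i})=1$ holds by construction, since $w(s_{S},t_{i})$ is one of the defining relators. For $i\notin S$ I would argue by contradiction using Greendlinger's Lemma: if $w(s_{S},t_{i})$ were trivial in $G_{n}$ it would contain, as a subword, more than half of some cyclic conjugate of a relator $w(s_{S'},t_{j})$ with $j\in S'$. Since $k$ is large, a subword of length exceeding $L/2$ necessarily spans at least three consecutive $s$-blocks, hence contains a full block $s_{S'}^{m}$ and a full letter $t_{j}$; matching these against the syllables of $w(s_{S},t_{i})$ forces $s_{S'}=s_{S}$ and $t_{j}=t_{i}$, that is $S'=S$ and $j=i$. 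But then $i=j\in S'=S$, contradicting $i\notin S$. Therefore $w(a_{S},b_{i})\neq 1$ whenever $i\notin S$, which completes the description of the traces.

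The main obstacle is the combinatorial core of the second paragraph: engineering $w$ together with the substitution $a_{S}=s_{S}$, $b_{i}=t_{i}$ so that the whole family $\{w(s_{S},t_{i})\}$ is simultaneously small cancellation, and verifying that every piece is genuinely confined to a single syllable. The alternating shape of $w$ and the use of a separate generator for each subset $S$ and each index $i$ are precisely what keep the pieces short and what make the Greendlinger matching force the index coincidences; any sloppiness here, for instance putting the increasing exponents on $y$ rather than on $x$, which would let long $t_{i}$-blocks be shared between relators with the same $i$, destroys the $C'(1/6)$ bound. Once the piece estimate is secured, torsion-freeness, hyperbolicity, and the trace computation all follow from standard small cancellation theory.
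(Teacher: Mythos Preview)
Your strategy is the same as the paper's: for each $n$ build a finite $C'(1/6)$ presentation on generators indexed by $\{1,\dots,n\}$ and by the subsets of $\{1,\dots,n\}$, with defining relators $w(\cdot,\cdot)$ only for the incident pairs. The paper takes $w(x,y)=xy^{7}x^{2}y^{6}\cdots x^{7}y$ and, having checked $C'(1/6)$, invokes diagrammatic (in fact singular) asphericity both for torsion-freeness and, via the independence of relators in an aspherical presentation, for the non-triviality of $w(a_{i},b_{\sigma})$ when $i\notin\sigma$. You instead take the simpler $w(x,y)=xyx^{2}y\cdots x^{k}y$, cite the standard torsion-freeness of $C'(1/6)$ groups with non-proper-power relators, and use Greendlinger's Lemma directly for the non-triviality step. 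Both are legitimate small-cancellation routes; yours is a bit more elementary at the last step, while the paper's asphericity argument handles torsion-freeness and relator-independence in one stroke.

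There is one genuine oversight in your piece estimate. You only compare two \emph{distinct} relators of $\mathcal R$, but the $C'$ condition is stated for the symmetrization, so you must also bound pieces between a relator and a nontrivial cyclic shift of itself. For your word such a self-piece can have the shape $s_{S}^{\,a}t_{i}s_{S}^{\,b}$; taking the common subword at the block boundaries $(k{-}2,k{-}1)$ and $(k{-}1,k)$ gives $s_{S}^{\,k-2}t_{i}s_{S}^{\,k-1}$ of length $2k-2$, not $\le k$ as you assert. Since $\abs{w}=k(k+3)/2$, the inequality $2k-2<\abs{w}/6$ still holds once $k\ge 20$, so your ``sufficiently large $k$'' saves the conclusion; but the stated bound ``every piece lies inside a single $s_{S}$-block'' and its justification need to be corrected.
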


For the moment we postpone the proof of Theorem \ref{MainTheo}, and rather give 
the definition used in this theorem and derive the corollary concerning the 
first-order theories of existentially closed $CSA$-groups which particularly interests us. 

Hyperbolic groups have been introduced by Gromov in \cite{Gromov87} and have 
been intensively studied in geometric group theory 
since \cite{{CoornaertDelzantPapadopoulos1990}, {GhysdelaHarpe1990}}. 
They are finitely presented groups for which, by definition, the Cayley 
graph ``looks like" a tree. In this sense they are finitely generated groups with 
only ``few" relations between the generators, and hence can be seen as a generalization 
of finitely generated free groups. 
For example, by Fact \ref{FactTFHypImpliesCSA} below, 
torsion-free hyperbolic groups are $CSA$-groups, an elementary property of free groups. 

If $\phi(\overline{x},\overline{y})$ is a formula in a given language
$\mathcal L$, and $\mathcal C$ is any (not necessarily elementary)
class of $\mathcal L$-structures, we say that $\phi$ has the 
\emph{independence property relative to $\mathcal C$} if
for any positive integer $n$,
there exists a structure $M_{n}$ in $\mathcal C$ with sequences 
of tuples 
$\overline{x}_{1}, \dots , \overline{x}_{i}, \dots , \overline{x}_{n}$, 
and 
$\overline{y}_{\sigma}$, 
where there are $2^{n}$ indices $\sigma$ varying over the set of subsets of 
$\{ 1,\dots,n\}$, such that the following holds in $M_{n}$:
$$
M_{n}\models\phi(\overline{x}_{i},\overline{y}_{\sigma})
\quad\text{if and only if}\quad
i\in\sigma.
$$
When the class $\mathcal C$ consists of all models of a complete 
first-order theory $T$, this definition corresponds to the usual notion of 
independence property of the formula $\phi$ relative to the
first-order theory $T$, as defined in \cite{Shelah90}.
As we are dealing with groups here, the language 
will be that of groups, and our formula will typically be a
group equation in two variables, i.e.\ of the form
$\ulcorner w(x,y)=1\urcorner$ with
$w(x,y)$ a \emph{group word} in two variables $x$ and $y$
(i.e.\ a word in $x^{\pm1}$ and $y^{\pm1}$). 

Before the proof of Theorem \ref{MainTheo}, we derive the corollary 
concerning the model theory of existentially closed $CSA$-groups. 

\begin{cor}
\label{CorECCSAGpsHaveIP}
Assume $f(2)=0$ and let $G$ be an existentially closed $CSA_{f}$-group, or more 
generally a group having the same universal theory as an existentially 
closed $CSA_{f}$-group. 
Then the first-order theory of $G$ has the independence property. 
\end{cor}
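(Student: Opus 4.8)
The plan is to reduce the statement to a single claim about existential sentences and then transfer the shattering configurations produced by Theorem~\ref{MainTheo} into $G$ by existential closure. Let $w(x,y)$ be the word furnished by Theorem~\ref{MainTheo}, and for each positive integer $n$ let $\theta_{n}$ be the existential sentence in the language of groups asserting that the finite shattering configuration of size $n$ for $w$ exists; explicitly,
$$\theta_{n}\ :=\ \exists\overline{x}_{1}\cdots\exists\overline{x}_{n}\ \exists(\overline{y}_{\sigma})_{\sigma\subseteq\{1,\dots,n\}}\ \bigwedge_{i,\sigma}\bigl(w(\overline{x}_{i},\overline{y}_{\sigma})=1\ \Leftrightarrow\ i\in\sigma\bigr),$$
where for each pair $(i,\sigma)$ the biconditional selects the equation $w(\overline{x}_{i},\overline{y}_{\sigma})=1$ or its negation according to whether $i\in\sigma$, so that the matrix is quantifier-free and $\theta_{n}$ is genuinely an existential sentence. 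Once we show $G\models\theta_{n}$ for every $n$, the corollary follows at once: taking $M_{n}:=G$ for each $n$ in the definition of relative independence property exhibits $\ulcorner w(x,y)=1\urcorner$ as having the independence property relative to the class of all models of $\Th(G)$, which, as noted after the definition, is exactly the independence property of the theory $\Th(G)$.

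It remains to prove $G\models\theta_{n}$. By Theorem~\ref{MainTheo} there is a torsion-free hyperbolic group $M$ carrying tuples that witness $\theta_{n}$, so $M\models\theta_{n}$. By Fact~\ref{FactTFHypImpliesCSA} the group $M$ is a $CSA$-group, and being torsion-free it contains no nontrivial finite subgroup, hence no elementary abelian $p$-subgroup of rank $f(p)+1$ for any $p$; thus $M$ is a $CSA_{f}$-group (here $f(2)=0$ is automatic). Now I would embed both $G$ and $M$ into a common $CSA_{f}$-group $N$, the free product $N:=G*M$ being the natural candidate. Since $\theta_{n}$ is existential it persists from the subgroup $M$ to $N$, so $N\models\theta_{n}$; and since $G$ is existentially closed in the class of $CSA_{f}$-groups and $G\leq N$ with $N$ a $CSA_{f}$-group, the truth of the existential sentence $\theta_{n}$ descends from $N$ to $G$. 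Hence $G\models\theta_{n}$, as required.

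The main obstacle is this joint-embedding step, namely verifying that $N=G*M$ is again a $CSA_{f}$-group. The torsion bound is preserved because, by the Kurosh subgroup theorem, every finite subgroup of a free product is conjugate into one of the factors, so $N$ acquires no elementary abelian $p$-subgroups beyond those already bounded in $G$ and in $M$. The malnormality of maximal abelian subgroups of $N$ is handled through the action on the Bass--Serre tree: a nontrivial element fixes at most one vertex, since edge stabilizers are trivial, which forces a maximal abelian subgroup conjugate into a factor to be malnormal via the $CSA$ property of that factor, while a maximal abelian subgroup containing a hyperbolic element is cyclic and malnormal by the standard structure of free products. Thus free products of $CSA$-groups are $CSA$-groups and the $CSA_{f}$ condition is preserved; alternatively one may invoke directly the amalgamation results for $CSA_{f}$-groups established in \cite{HoucineJaligot04}.

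Finally, the \emph{more generally} clause is immediate and explains the stated generality. If $G'$ has the same universal theory as the existentially closed $CSA_{f}$-group $G$, then $G'$ has the same existential theory; since each $\theta_{n}$ is an existential sentence with $G\models\theta_{n}$, we also have $G'\models\theta_{n}$ for every $n$. Running the reduction of the first paragraph with $G'$ in place of $G$ then shows that $\Th(G')$ has the independence property.
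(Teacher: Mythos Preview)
Your argument is correct, and the overall logical shape---produce the existential sentence $\theta_n$, realize it in a $CSA_f$-group, and pull it into $G$---matches the paper's. The difference lies in how the transfer into $\Th(G)$ is carried out.

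The paper never forms $G*M$ and never appeals to existential closure directly. Instead it invokes Fact~\ref{FactCSAEmbedsECCSA}: since the universal theory of an existentially closed $CSA_f$-group holds in every $CSA_f$-group, each torsion-free hyperbolic witness $G_n$ is a model of $\Th(G)_\forall$ and therefore embeds into some model of $\Th(G)$; the sentence $\theta_n$ then lies in $\Th(G)$ by completeness. This route is purely model-theoretic and treats the ``more generally'' case uniformly from the outset, since only the universal theory of $G$ is used.

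Your route instead builds a concrete joint over-group $N=G*M$ and uses the definition of existential closure to pull $\theta_n$ down to $G$, handling the general case afterwards by equality of existential theories. This is perfectly valid, but it costs you the extra group-theoretic lemma that free products of $CSA_f$-groups (with $f(2)=0$) are again $CSA_f$. Your Bass--Serre sketch for this is sound---the absence of involutions is exactly what rules out an axis-reflecting normalizer of a hyperbolic cyclic subgroup, and Kurosh handles the torsion bound---and the result is indeed available in \cite{HoucineJaligot04} and \cite{MyasnikovRemeslenikov96}. So the trade-off is: the paper's argument is shorter and avoids this lemma, while yours makes the role of existential closure more transparent and explicit.
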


For deriving Corollary \ref{CorECCSAGpsHaveIP} from Theorem \ref{MainTheo},
we will use the following fact.

\begin{fact}
\label{FactCSAEmbedsECCSA}
Assume $f(2)=0$ and let $G$ be a group having the same universal theory as 
an existentially closed $CSA_{f}$-group. 
Then any $CSA_{f}$-group embeds into a model of the first-order 
theory of $G$. 
\end{fact}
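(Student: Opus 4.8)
The plan is to reduce the statement to a standard model-theoretic embedding criterion together with a joint embedding property for the class of $CSA_f$-groups. Write $\mathcal K$ for the (first-order, inductive) class of $CSA_f$-groups with $f(2)=0$, and let $E$ be an existentially closed member of $\mathcal K$ with $\Th_\forall(E)=\Th_\forall(G)$, as provided by the hypothesis. Recall the classical fact (a consequence of compactness and the diagram lemma) that a structure $A$ embeds into a model of a theory $T$ if and only if $A$ is a model of $T_\forall$, the set of universal consequences of $T$. Since $\Th(G)$ is complete, its universal consequences are exactly $\Th_\forall(G)$. Hence it suffices to prove that an arbitrary $CSA_f$-group $H$ satisfies $\Th_\forall(G)=\Th_\forall(E)$, that is, that every universal sentence true in $E$ is true in $H$.

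Dualizing, I would instead show that every existential sentence $\psi$ true in $H$ is true in $E$. The key ingredient is that $\mathcal K$ has the joint embedding property: any two $CSA_f$-groups embed into a common one. Granting this, choose $M\in\mathcal K$ together with embeddings $H\hookrightarrow M$ and $E\hookrightarrow M$. Existential sentences are preserved under embeddings, so $H\models\psi$ yields $M\models\psi$; and since $E$ is existentially closed in $\mathcal K$ and sits inside $M\in\mathcal K$, the sentence $\psi$, being existential and parameter-free, reflects back down to $E$, whence $E\models\psi$. This establishes the inclusion $\Th_\forall(E)\subseteq\Th_\forall(H)$ and completes the reduction, so that $H$ embeds into a model of $\Th(G)$ as desired.

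The main obstacle, and the only genuinely group-theoretic step, is the joint embedding property. The natural candidate for the common overgroup of $H$ and $E$ is the free product $M=H*E$, so what must be verified is that $\mathcal K$ is closed under free products. Closure under the $CSA$ condition is the classical statement that a free product of $CSA$-groups is again $CSA$: commutative transitivity survives because in a free product the centralizer of a nontrivial element is either contained in a factor or infinite cyclic, while malnormality of the maximal abelian subgroups follows from the malnormality of the factors $H$ and $E$ inside $H*E$ combined with malnormality inside each factor. The torsion constraint defining $CSA_f$ is preserved because, by the Kurosh subgroup theorem, every finite subgroup of $H*E$ is conjugate into a factor; in particular $H*E$ has no involutions and no elementary abelian $p$-subgroup of rank exceeding $f(p)$, so $H*E\in\mathcal K$. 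As both factors embed canonically into $H*E$, this provides the required joint embedding and thereby the fact.
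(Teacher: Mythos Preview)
Your argument is correct and shares the same model-theoretic backbone as the paper's: both reduce the claim to showing that every $CSA_f$-group $H$ satisfies $\Th_\forall(E)$, and then invoke the standard fact that models of $T_\forall$ are exactly the substructures of models of $T$.

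The difference lies in how that inclusion $\Th_\forall(E)\subseteq\Th_\forall(H)$ is obtained. The paper simply quotes it as an already-proved fact (from the proof of \cite[Corollary~8.2]{HoucineJaligot04}) and is done in two lines. You instead \emph{prove} it by establishing the joint embedding property for $\mathcal K$ via the free product $H*E$, checking that free products preserve the $CSA$ condition and, via Kurosh, the torsion bounds encoded by $f$; existential closure of $E$ then pulls existential sentences down from $H*E$. This is indeed how such statements are typically established (and is essentially what underlies the cited result), so your route is more self-contained and makes the group-theoretic content explicit, at the cost of length. The paper's route is shorter but relies on the reader chasing the reference. Either way, the logical structure and the conclusion are the same.
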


\begin{proof}
It was explicitly seen in the proof of 
\cite[Corollary 8.2]{HoucineJaligot04} that the universal theory of an 
existentially closed $CSA_{f}$-group is true in any $CSA_{f}$-group. 
Now the fact that the models of the universal part $T_{\forall}$ of a first-order theory $T$ 
are precisely the substructures of models of $T$ \cite[Corollary 6.5.3]{Hodges(book)93} 
yields our claim. 
\end{proof}

We will also use the following fact, according to which $CSA$-groups 
can be seen as a generalization of torsion-free hyperbolic groups.

\begin{fact}[{\cite[Proposition 12]{MyasnikovRemeslenikov96}}]
\label{FactTFHypImpliesCSA}
Every torsion-free hyperbolic group is $CSA$.
\end{fact}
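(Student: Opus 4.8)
The plan is to verify the defining property directly: in a torsion-free hyperbolic group $G$, every maximal abelian subgroup is malnormal. The input I would draw on is the structure of centralizers via the action on the Gromov boundary. In a hyperbolic group every element of infinite order is loxodromic for the action on the Cayley graph and fixes exactly two points $a^{+}, a^{-}$ of the boundary $\partial G$; its centralizer, and more generally the setwise stabilizer $E(a)$ of the pair $\{a^{+}, a^{-}\}$, is virtually cyclic, with $C_{G}(a) \leq E(a)$. Since $G$ is torsion-free, every nontrivial element has infinite order, and a torsion-free virtually cyclic group is infinite cyclic (the infinite dihedral alternative contains involutions). Consequently $E(a) = \langle c \rangle$ is infinite cyclic and contains $a$ as a power. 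It follows that every maximal abelian subgroup is infinite cyclic: if $A$ is maximal abelian and $a \in A$ is nontrivial, then $A \subseteq C_{G}(a) \subseteq E(a) = \langle c \rangle$, and maximality forces $A = \langle c \rangle$.

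For malnormality, let $A = \langle a \rangle$ be maximal abelian and suppose $A \cap A^{g} \neq \{1\}$ for some $g \in G$; the goal is to show $g \in A$. Choose a nontrivial $b \in A \cap A^{g}$, so that $b = a^{m} = (a^{g})^{n}$ with $m, n \neq 0$ and $a^{g} = g^{-1} a g$. The fixed-point set on $\partial G$ of an infinite-order element coincides with that of each of its nonzero powers, so $b$ fixes exactly $\{a^{+}, a^{-}\}$, and it likewise fixes exactly the pair of $a^{g}$, namely $g^{-1}\{a^{+}, a^{-}\}$. Hence $g^{-1}\{a^{+}, a^{-}\} = \{a^{+}, a^{-}\}$, which is precisely the statement that $g \in E(a)$.

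To finish, recall that $E(a) = \langle c \rangle$ is abelian and contains $A = \langle a \rangle$; maximality of $A$ then gives $E(a) = A$, so $g \in A$, as required. This shows every maximal abelian subgroup of $G$ is malnormal, i.e.\ that $G$ is $CSA$.

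The step that carries the real weight is the geometric input of the first paragraph: that infinite-order elements are loxodromic with a well-defined two-point fixed set, and that the stabilizer of that set is virtually cyclic. Granting these standard facts about hyperbolic groups, the malnormality argument is essentially formal. The only delicate point is that $g$ is permitted to \emph{swap} $a^{+}$ and $a^{-}$ rather than fix each individually; but a swap still places $g$ in the \emph{setwise} stabilizer $E(a)$, which is all the argument requires.
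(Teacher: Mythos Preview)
Your argument is correct. Both you and the paper rely on the same geometric input---centralizers (indeed, the virtually cyclic subgroup $E(a)$) of infinite-order elements in a hyperbolic group are virtually cyclic, hence infinite cyclic in the torsion-free case---but you diverge at the malnormality step. You stay on the boundary: from $b=a^{m}=(a^{g})^{n}$ you read off $\operatorname{Fix}_{\partial G}(b)=\{a^{+},a^{-}\}=g^{-1}\{a^{+},a^{-}\}$, placing $g$ in the setwise stabilizer $E(a)=A$. The paper instead extracts a purely algebraic lemma: once centralizers of nontrivial elements are cyclic, commuting is an equivalence relation on $G\setminus\{1\}$, so $A\cap A^{g}\neq\{1\}$ forces $A=A^{g}$; writing $A=\langle a\rangle$ gives $a^{g}\in\{a^{\pm1}\}$, whence $a$ commutes with $g^{2}$, and transitivity of commutation then yields $g\in A$. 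Your route is clean and conceptual but invokes the boundary action twice; the paper's route is more elementary in that, after the single citation that centralizers are virtually cyclic, the remainder (their Lemma~\ref{DoubleLemma}) is pure group theory valid in \emph{any} torsion-free group with cyclic centralizers, not just hyperbolic ones.
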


We shall give here a short proof for reader's convenience. 
By \cite[Corollaire 7.2]{CoornaertDelzantPapadopoulos1990} or 
\cite[Th\'eor\`eme 38]{GhysdelaHarpe1990}, the centralizer of any element 
of infinite order in a hyperbolic group is virtually cyclic, i.e. has a cyclic subgroup of 
finite index. Hence Fact \ref{FactTFHypImpliesCSA} is indeed a special case of 
the following purely group theoretic lemma. 

\begin{lem}\label{DoubleLemma}
Let $G$ be a torsion-free group. 
\begin{itemize}
\item[\textrm{a.}]
If $G$ is virtually cyclic, then $G$ is cyclic. 
\item[\textrm{b.}]
If centralizers of nontrivial elements of $G$ are cyclic, then $G$ is $CSA$. 
\end{itemize}
\end{lem}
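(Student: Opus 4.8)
The plan is to treat both parts through a single elementary observation: a nontrivial torsion-free group cannot contain a copy of the infinite dihedral group $D_\infty$, since an element inverting an infinite cyclic subgroup would have to be an involution. Both statements reduce to ruling out this configuration.

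For part (a), I would first dispose of the case $G=\{1\}$; otherwise $G$ is infinite and contains an infinite cyclic subgroup of finite index. Passing to its normal core yields an infinite cyclic normal subgroup $C=\langle c\rangle\trianglelefteq G$ of finite index. Conjugation then defines a homomorphism $G\to\Aut(C)\cong\{\pm1\}$ whose kernel $K=C_G(c)$ has index at most $2$ in $G$. Inside $K$ the subgroup $C$ is central of finite index, so $K$ is center-by-finite; by Schur's theorem its derived subgroup is finite, hence trivial since $G$ is torsion-free, so $K$ is abelian. A finitely generated torsion-free abelian group that is virtually cyclic is infinite cyclic, so $K\cong\Z$. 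It remains to exclude $[G:K]=2$: any $g\in G\setminus K$ would invert a generator of $K$, forcing $g^2=1$ and thus $g=1$, a contradiction. Hence $G=K$ is cyclic.

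For part (b), I would first identify the maximal abelian subgroups. Assuming $G\neq\{1\}$, a maximal abelian subgroup $A$ is nontrivial, and for any $a\in A\setminus\{1\}$ the inclusion $A\subseteq C_G(a)$ together with maximality and the hypothesis (that $C_G(a)$ is cyclic, hence abelian) gives $A=C_G(a)$; thus $A$ is infinite cyclic and $C_G(b)=A$ for every nontrivial $b\in A$. To prove malnormality, suppose $1\neq h\in A\cap A^{g}$ and write $h=gkg^{-1}$ with $k\in A\setminus\{1\}$. Taking centralizers and using conjugation-equivariance gives $A=C_G(h)=C_G(gkg^{-1})=g\,C_G(k)\,g^{-1}=gAg^{-1}=A^{g}$, so $g\in N_G(A)$. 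Finally the conjugation action of $N_G(A)$ on $A\cong\Z$ yields a homomorphism $N_G(A)\to\{\pm1\}$ with kernel $C_G(A)=A$, so $[N_G(A):A]\le2$; the index-$2$ case is excluded exactly as in part (a), giving $g\in A$, as required for malnormality.

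The genuinely substantive point in both arguments --- and the place where I expect the real work to concentrate --- is the exclusion of the $D_\infty$-type configuration: showing that torsion-freeness upgrades ``$g$ normalizes the infinite cyclic subgroup'' to ``$g$ centralizes it.'' Everything else (normal cores, Schur's theorem, and the structure of finitely generated torsion-free abelian groups) is standard; the torsion-free no-inversion step is what ties the two parts together and makes the lemma a clean common generalization.
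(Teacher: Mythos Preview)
Your proof is correct and follows essentially the same route as the paper's: both parts rest on Schur's theorem together with the observation that in a torsion-free group no element can invert an infinite cyclic normal subgroup (your ``no $D_\infty$'' step), and the organisation differs only in that the paper rules out inversion first (to get that the center has finite index) and then applies Schur to all of $G$, whereas you apply Schur to the index-$\le2$ centraliser and rule out index $2$ afterwards. The one genuine variation is the final step of (b): the paper concludes via commutation-transitivity (from $a^{b^2}=a$ and $b^2\ne1$ one gets that $a$ commutes with $b^2$, hence with $b$), while you reuse the $g^2=1$ contradiction from part (a); both are valid and equally short.
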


\begin{proof}\
\textrm{a.}\ 
Assume $G$ is virtually cyclic. 
First, it is easy to verify that in every torsion-free virtually cyclic group 
the center has finite index. 
(This is true even for virtually cyclic groups without involutions.) 
Second, there is a theorem, often attributed to Issai Schur,
which states that if the index of the center of a group is finite,
then the derived subgroup of the group is also finite.
In particular, if in a torsion-free group the center has finite index,
then the group is abelian.
An elementary proof of this theorem of Schur
may be found in \cite{Rosenlicht1962}:
the idea is to show that in a group whose center has index $n$,
the number of commutators is at most $(n-1)^2+1$, and that the $(n+1)$st
power of each commutator is the product of $n$ commutators.
Hence $G$ is abelian. Now it follows that $G$ is cyclic from 
the classification of finitely generated abelian groups,
or from the simpler fact that in a torsion-free abelian group, each two
cyclic subgroups either intersect trivially,
or are both contained in some cyclic subgroup.

\textrm{b.}\
Assume now centralizers of nontrivial elements of $G$ are cyclic. 
In particular, commuting is an equivalence relation on $G\setminus\{1\}$.
Consider an arbitrary maximal abelian subgroup $A$ of $G$.
Then $A$ is infinite cyclic.
Suppose $A$ is not malnormal.
Let $a$ be a generator of $A$, and let $b$ be an element of
$G\setminus A$ such that $A\cap A^{b}\ne\{1\}$. 
Since commutativity is transitive and $A$ is maximal, $A=A^{b}$.
Therefore, $a^b\in\{a^{\pm1}\}$, and hence $(a^b)^b=a$.
Since $a$ commutes with $b^2$, and $b^2$ commutes with $b$,
we have that $a$ commutes with $b$, and therefore $b\in A$,
which gives a contradiction.
Thus $A$ is malnormal.
Hence $G$ is $CSA$. 
\end{proof}

We remark that, in addition to Fact \ref{FactTFHypImpliesCSA},
Lemma \ref{DoubleLemma} implies that all maximal
abelian subgroups of a torsion-free hyperbolic group are cyclic.

\begin{proof}[Proof of Corollary\/ \textup{\ref{CorECCSAGpsHaveIP}.}]
By Theorem \ref{MainTheo}, there exists a group word $w(x,y)$ in two 
variables $x$ and $y$ such that the formula $\ulcorner w(x,y)=1\urcorner$
has the independence property relative to the class of
torsion-free hyperbolic groups. 
This means that for any positive integer $n$,
there exists a torsion-free hyperbolic group $G_{n}$ with sequences 
of elements $x_{1},\dots,x_{i},\dots,x_{n}$, and 
$y_{\sigma}$, with 
$\sigma\subseteq{\{1,\dots,n\}}$, such that 
$$
G_{n}\models w(x_{i},y_{\sigma})=1
\quad\text{if and only if}\quad
i\in \sigma.
$$
By Fact \ref{FactTFHypImpliesCSA}, any torsion-free hyperbolic 
group $G_{n}$ is a $CSA$-group, and even, as it is torsion-free, 
a $CSA_{f}$-group for an arbitrary function $f$. 

Let now $\Th(G)$ denote the first-order theory of a group $G$ having the same 
universal theory as an existentially closed $CSA_{f}$-group. 
By Fact \ref{FactCSAEmbedsECCSA}, each group $G_{n}$ embeds into 
a model of $\Th(G)$.
In particular, as the truth of the formula $\ulcorner w(x,y)=1\urcorner$
is preserved 
under embeddings, $\Th(G)$ contains the formula 
$$
(\exists_{1\leq i \leq n}\;x_{i})\,
(\exists_{\sigma\subseteq\{1,\dots,n\}}\;y_{\sigma})\,
\bigl[(\bigwedge_{i \in \sigma}\;w(x_{i},y_{\sigma})=1)
\wedge
(\bigwedge_{i \notin \sigma}\;w(x_{i},y_{\sigma})\neq 1)\bigr].
$$
Since this is true for any positive $n$,
the formula $\ulcorner w(x,y)=1\urcorner$ has the independence property
relative to 
$\Th(G)$.
Hence $\Th(G)$ has the independence property. 
\end{proof}

\section{Small-cancellation groups}

In this section we consider groups together with their presentations
by generators and defining relations.
We take most definitions and results from
\cite{OlshanskiiBook1991}, which is our main source of references
for the combinatorial treatment of group presentations.
Some terms are also borrowed from \cite{CCH81}.

Recall that a \emph{group presentation} is an ordered pair
$\GP{A}{\mathcal R}$,
also written as $\GP{A}{R=1;R\in\mathcal R}$, where
$A$ is an arbitrary set, called the \emph{alphabet}, and $\mathcal R$
is a set of \emph{group words} over $A$, called \emph{relators} or
\emph{defining relators}.
Every relator from $\mathcal R$ is a word in the alphabet $A\sqcup A^{-1}$,
composed of elements of $A$ and their formal inverses.
Relators can also be viewed as a simplified form of
\emph{terms} in the language of groups augmented with constants from~$A$.

By abuse of notation, the same symbol $\GP{A}{\mathcal R}$ shall be used
to denote a certain group given by this presentation, that is
a group $G$ where all constants from $A$ are interpreted
in such a way that $G$ is generated by their interpretations,
all the relations $\ulcorner R=1\urcorner$, $R\in\mathcal R$, hold in $G$,
and all other relations of this form that hold in $G$ are just
their group-theoretic consequences.
A presentation of a group $G$ is any presentation $\GP{A}{\mathcal R}$
such that $G\cong\GP{A}{\mathcal R}$.
Every group has a presentation (by the multiplication table, for example).

A presentation $\GP{A}{\mathcal R}$ is called \emph{finite}
if both sets $A$ and $\mathcal R$ are finite.
A group is \emph{finitely presented} if it has a finite presentation.

Since \emph{cyclic reduction} of relators, i.e.\ \emph{cyclic shifts}
and cancellation of pairs of adjacent mutually inverse letters, 
does not change the group given by the presentation,
every group has a presentation with all relators \emph{cyclically reduced}.
Presentations, or sets of relators,
in which all relators are cyclically reduced shall be called
\emph{cyclically reduced} themselves.

A cyclically reduced presentation $\GP{A}{\mathcal R}$,
or a set of relators $\mathcal R$,
shall be called \emph{symmetrized} if
$\mathcal R$ contains the \emph{visual inverse} $R^{-1}$ and every
\emph{cyclic shift} $YX$ of every $R\equiv XY\in\mathcal R$.
(The visual inverse of $ab^{-1}c$, for example, is $c^{-1}ba^{-1}$;
$XY$ denotes the concatenation of the words $X$ and $Y$.)
The \emph{symmetrized closure}, or \emph{symmetrization},
of $\mathcal R$ is the minimal symmetrized set of relators
containing $\mathcal R$.

The opposite of ``symmetrized'' is ``concise'': 
following \cite{CCH81}, we call a cyclically reduced presentation
$\GP{A}{\mathcal R}$, or a set of relators $\mathcal R$,
\emph{concise} if no two distinct elements of $\mathcal R$
are cyclic shifts of each other or of each other's visual inverses.

In what follows, all group presentations shall be assumed
cyclically reduced. The length of a word $X$ shall be denoted 
by $\abs{X}$. 

If $R_{1}\equiv XY_1$ and $R_{2}\equiv XY_2$ are two distinct words in
a symmetrized set of relators $\mathcal R$, then $X$ is called
a \emph{piece} (of $R_1$) relative to~$\mathcal R$.

\begin{defn}
Let $\lambda$ be a number in $[0,1]$.
Let $\GP{A}{\mathcal R}$ be a group presentation,
and $\tilde{\mathcal R}$ be the symmetrization of $\mathcal R$.
Then the set $\mathcal R$, or the presentation $\GP{A}{\mathcal R}$,
is said to satisfy the \emph{condition} $C'(\lambda)$ if
$\abs{X}<\lambda\abs{R}$ 
for any $R\in\tilde{\mathcal R}$ and for any subword $X$ of $R$ which is
a piece relative to $\tilde{\mathcal R}$.
\end{defn}

The condition $C'(\lambda)$ with ``small'' $\lambda$
is a standard example of a so-called ``small cancellation condition''.
There are various such small cancellation conditions and generally
such conditions allow one to prove that
large traces of relators remain in all their consequences.

The following fact about the condition $C'(\lambda)$
shows that it can be used for constructing hyperbolic groups.

\begin{fact}[{\cite[Th\'eor\`eme 33]{GhysdelaHarpe1990}}]
\label{FactC'1/6ImpliesHyp}
A finitely presented group which has a presentation satisfying $C'(1/6)$
is hyperbolic.
\end{fact}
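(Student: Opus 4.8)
The plan is to route through the standard equivalence between hyperbolicity and a linear isoperimetric inequality, and to extract that inequality from the small cancellation hypothesis by way of Greendlinger's lemma. Recall that a finitely presented group $G=\GP{A}{\mathcal R}$ is hyperbolic precisely when its Dehn function is bounded above by a linear one, i.e.\ when there is a constant $K$ such that every word $w$ with $w=_G 1$ bounds a van Kampen diagram with at most $K\abs{w}$ faces. This equivalence between a linear isoperimetric inequality and the $\delta$-thinness of geodesic triangles is one of the foundational results of the theory, and I would invoke it as a black box. Thus it suffices to exhibit such a $K$ for a presentation satisfying $C'(1/6)$.

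The heart of the matter is Greendlinger's lemma, which I would establish by a combinatorial curvature analysis of reduced van Kampen diagrams. Let $w$ be nonempty, freely and cyclically reduced, with $w=_G 1$, and let $D$ be a reduced van Kampen diagram over the symmetrization $\tilde{\mathcal R}$ whose boundary reads $w$. Because $D$ is reduced, the label of every interior edge is a piece relative to $\tilde{\mathcal R}$, hence by $C'(1/6)$ has length strictly less than $\frac16\abs{R}$ for the relator $R$ bounding either adjacent face. Consequently the boundary of each interior face is cut by its interior edges into more than six arcs. Assigning an angle to each corner of each face and applying the combinatorial Gauss--Bonnet formula (for a disk diagram the total combinatorial curvature equals $2\pi$), this surplus of arcs forces the interior faces to carry nonpositive curvature, so that all positive curvature is concentrated on the faces meeting $\partial D$. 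The upshot is Greendlinger's lemma: $w$ contains a subword $s$ which is also a subword of some $R\equiv st\in\tilde{\mathcal R}$ with $\abs{s}>\tfrac12\abs{R}$.

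Given Greendlinger's lemma, the linear isoperimetric inequality follows by running Dehn's algorithm. Whenever $w=_G 1$ is nonempty and reduced, the lemma supplies a subword $s$ of $w$ with $\abs{s}>\tfrac12\abs{R}$ for some $R\equiv st\in\tilde{\mathcal R}$; replacing $s$ by $t^{-1}$, whose length is $\abs{R}-\abs{s}<\tfrac12\abs{R}<\abs{s}$, produces a shorter word still representing $1$, at the cost of a single relator. Since each step strictly decreases the integer length, at most $\abs{w}$ steps reduce $w$ to the empty word, so $w$ is a product of at most $\abs{w}$ conjugates of relators; this is the sought inequality with $K=1$. Feeding it back into the isoperimetric characterization of hyperbolicity yields the conclusion.

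The main obstacle is the proof of Greendlinger's lemma, and specifically the bookkeeping in the curvature argument: one must handle degenerate diagrams (boundary vertices of valence two, spurs, and faces meeting the boundary in several arcs) and make precise the passage from ``more than six short pieces around each interior face'' to the quantitative statement that some boundary face exposes more than half of its relator. I would organize this, as in Olshanskii's treatment, by first reducing to diagrams without interior vertices of small valence, and I would treat the equivalence ``linear isoperimetric inequality $\Leftrightarrow$ hyperbolic'' as an established input rather than rederiving thin triangles from the Dehn function.
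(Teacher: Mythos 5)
The paper does not actually prove this Fact: it is imported as a black box from \cite[Th\'eor\`eme 33]{GhysdelaHarpe1990}, so your proposal can only be measured against the standard argument in the literature --- which it reproduces faithfully. Your outline is sound: Greendlinger's lemma for $C'(1/6)$ (every nontrivial cyclically reduced word representing $1$ contains a subword $s$ of some relator $R\equiv st$ in the symmetrization with $\abs{s}>\frac{1}{2}\abs{R}$) follows from the curvature count you describe; Dehn's algorithm then yields an isoperimetric bound with at most $\abs{w}$ faces, since each substitution of $t^{-1}$ for $s$ strictly shortens the word; and the equivalence of a linear Dehn function with hyperbolicity is a legitimate foundational input. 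The cited source packages the final step slightly differently: it shows directly that a group admitting a \emph{Dehn presentation} (Dehn's algorithm solves the word problem) is hyperbolic, and checks via Greendlinger that $C'(1/6)$ presentations are Dehn presentations; this is the same proof with the isoperimetric inequality left implicit, so the difference between your route and theirs is only in which characterization of hyperbolicity is invoked.

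Two small points of care in your write-up, both within the bookkeeping you already flag. First, Greendlinger's lemma as usually stated requires cyclically reduced input, so in the iteration of Dehn's algorithm you must freely and cyclically reduce after each substitution; this is harmless, since free reduction only shortens the word and cyclic reduction is conjugation, neither costing a relator nor increasing length. Second, ``the label of every interior edge is a piece'' should read ``the label of every maximal interior arc between two faces is a piece'': reducedness rules out a mirror pair of faces across such an arc, and one must first consolidate edges and handle faces whose boundaries meet $\partial D$ in several arcs, as in the treatments of Lyndon--Schupp or \cite{OlshanskiiBook1991}. With these standard adjustments the proposal is a correct proof of the Fact.
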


As an intermediate step of our proof of Theorem \ref{MainTheo}, 
we are going to use \emph{asphericity\/} of a group presentation.
Various forms of asphericity and their mutual implications are studied
in \cite{CCH81}.
What we will actually need is the asphericity defined in
\cite[\S13]{OlshanskiiBook1991}, 
though it is defined there in a more general setting of
\emph{graded\/} presentations.
To avoid confusion with other forms of asphericity, however, we will borrow 
the term \emph{diagrammatic asphericity\/} from \cite{CCH81}.
(It follows from \cite[Theorem 32.2]{OlshanskiiBook1991}
that the asphericity defined in \cite[\S13]{OlshanskiiBook1991} in the case
of a non-graded cyclically reduced presentation is indeed the same as
diagrammatic asphericity in \cite{CCH81}.)
Diagrammatic asphericity of a group presentation means that there exists no 
reduced spherical van Kampen diagram over that presentation.
As we do not use the definition of asphericity in our argument,
we shall not explain it any further.

Taking small values of $\lambda$ in the condition
$C'(\lambda)$ results in diagrammatic asphericity.
Indeed, the $C'(1/5)$ condition implies a related small cancellation
condition $C(6)$ \cite[p. 127]{OlshanskiiBook1991}, 
which yields diagrammatic asphericity by
\cite[Theorem 13.3]{OlshanskiiBook1991}.
Hence we have the following fact.

\begin{fact}
\label{FactC'1/5ImpliesAspherical}
A presentation satisfying $C'(1/5)$ is diagrammatically aspherical.
\end{fact}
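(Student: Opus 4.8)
The plan is to split the implication at the non-metric small cancellation condition $C(p)$, which asserts that no relator of the symmetrized set $\tilde{\mathcal R}$ can be written as a product of fewer than $p$ pieces. Concretely I would first prove the elementary implication $C'(1/5)\Rightarrow C(6)$, and then prove the geometric implication that $C(6)$ yields diagrammatic asphericity. The former is a one-line count; the latter carries the real content.

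For the first step, suppose some $R\in\tilde{\mathcal R}$ is a concatenation $X_{1}\cdots X_{k}$ of $k$ pieces relative to $\tilde{\mathcal R}$. The hypothesis $C'(1/5)$ gives $\abs{X_{j}}<\abs{R}/5$ for each $j$, whence $\abs{R}=\sum_{j}\abs{X_{j}}<k\abs{R}/5$ and therefore $k>5$, i.e.\ $k\ge 6$. Thus no relator is a product of fewer than six pieces, which is precisely $C(6)$.

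For the second step I would show that no reduced spherical van Kampen diagram $\Delta$ over $\GP{A}{\mathcal R}$ exists. Suppose one did; it then has at least one face. After amalgamating the degree-two vertices, so that edges become the maximal common pieces between adjacent faces, reducedness forces every vertex to have degree at least $3$, while $C(6)$ forces the boundary cycle of every face to consist of at least $6$ edges. Writing $V$, $E$, $F$ for the numbers of vertices, edges, and faces, the vertex bound gives $2E=\sum_{v}\deg(v)\ge 3V$ and the face bound gives $2E=\sum_{f}\deg(f)\ge 6F$, so $V\le 2E/3$ and $F\le E/3$. Hence $\chi(\Delta)=V-E+F\le 2E/3-E+E/3=0$, contradicting $\chi(\Delta)=2$ for a diagram supported on the sphere. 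Therefore no such $\Delta$ exists and the presentation is diagrammatically aspherical.

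The step I expect to be the genuine obstacle is this passage to a reduced spherical map. One must check that cyclic reduction of the relators together with reducedness of $\Delta$ really do eliminate vertices of degree $<3$ and forbid cancelable pairs of adjacent faces, so that after the amalgamation the surviving edges correspond to maximal pieces and $C(6)$ translates into the face-degree bound; the degenerate configurations with very few faces have to be excluded by hand. Since \cite[Theorem 13.3]{OlshanskiiBook1991} establishes exactly this implication and \cite[Theorem 32.2]{OlshanskiiBook1991} identifies the asphericity of \cite[\S13]{OlshanskiiBook1991} with diagrammatic asphericity in the non-graded cyclically reduced case, I would ultimately cite these rather than reproduce the full combinatorial bookkeeping, after verifying that the hypotheses are met.
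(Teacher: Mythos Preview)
Your proposal is correct and follows exactly the route the paper takes: the paragraph preceding Fact~\ref{FactC'1/5ImpliesAspherical} derives it by first observing that $C'(1/5)$ implies $C(6)$ (citing \cite[p.~127]{OlshanskiiBook1991}) and then invoking \cite[Theorem~13.3]{OlshanskiiBook1991} for the passage from $C(6)$ to diagrammatic asphericity. Your version simply unpacks both steps---supplying the length count for the first and the Euler-characteristic sketch for the second---before falling back on the same references.
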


Diagrammatic asphericity of a group presentation implies many algebraic properties 
of the group. An important consequence is the following.

\begin{fact}[{\cite[Theorem 13.4]{OlshanskiiBook1991}}]
\label{FactAsphericIndep}
If $\GP{A}{\mathcal R}$ is a concise diagrammatically aspherical
group presentation,
then the defining relators are independent, i.e.\ no relation in the set 
$\{\,\ulcorner R=1\urcorner\,|\,R\in\mathcal R\,\}$
is a consequence of the others. 
\end{fact}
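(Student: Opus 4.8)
The plan is to argue by contradiction, converting a dependence among the relators into a reduced spherical diagram and then invoking diagrammatic asphericity. Suppose some relator $R_{0}\in\mathcal R$ were a consequence of the others, so that the relation $\ulcorner R_{0}=1\urcorner$ already holds in the group $\GP{A}{\mathcal R\setminus\{R_{0}\}}$. By van Kampen's lemma \cite{OlshanskiiBook1991}, since $R_{0}$ is a cyclically reduced word representing the identity in this group, there exists a van Kampen disk diagram $D$ over the relators $\mathcal R\setminus\{R_{0}\}$ whose boundary label, read along $\partial D$, is exactly the word $R_{0}$.

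The next step is to cap this disk so as to obtain a spherical diagram over the full presentation. As the word $R_{0}$ is itself a relator of $\GP{A}{\mathcal R}$, one may glue a single new $2$-cell $\Pi_{0}$, labelled by $R_{0}$, along $\partial D$. Topologically this attaches a disk to a disk along their common boundary circle, producing a diagram $\Delta$ carried by the sphere $S^{2}$, every cell of which is labelled by a relator of $\mathcal R$. I would then reduce $\Delta$ by successively deleting cancelling pairs of cells, that is, pairs of cells sharing an edge whose boundary labels, read from that edge, are mutually inverse; this operation preserves the spherical topology, and here I would simply appeal to the standard reduction procedure of \cite[\S13]{OlshanskiiBook1991}.

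The crucial point — and the place where conciseness enters — is that the distinguished cell $\Pi_{0}$ can never participate in a cancelling pair. For $\Pi_{0}$ to cancel against a neighbour, that neighbour would have to carry along the shared edge a label inverse to the corresponding label of $\Pi_{0}$, which forces the neighbour to be labelled by a cyclic shift of $R_{0}^{\pm1}$. But every cell of $\Delta$ other than $\Pi_{0}$ comes from $D$ and is therefore labelled by a cyclic shift of $(R')^{\pm1}$ for some $R'\in\mathcal R\setminus\{R_{0}\}$. Conciseness says precisely that no such $R'$ is a cyclic shift of $R_{0}$ or of its visual inverse $R_{0}^{-1}$, and a short verification shows that this makes the set of cyclic shifts of $(R')^{\pm1}$ disjoint from the set of cyclic shifts of $R_{0}^{\pm1}$. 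Hence no neighbour can ever form a cancelling pair with $\Pi_{0}$ at any stage of the reduction; since cancellation removes cells only in pairs, $\Pi_{0}$ survives into the fully reduced diagram $\Delta'$.

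We thus arrive at a reduced spherical diagram $\Delta'$ over $\GP{A}{\mathcal R}$ containing at least one cell, namely $\Pi_{0}$, contradicting diagrammatic asphericity. This contradiction shows that no $R_{0}\in\mathcal R$ is a consequence of the remaining relators, which is the asserted independence. I expect the main obstacle to lie in the reduction step: one must be sure that reducing $\Delta$ keeps it carried by $S^{2}$ and genuinely cannot touch $\Pi_{0}$, so the careful formulation of ``cancelling pair'' and the disjointness-of-cyclic-shifts consequence of conciseness are exactly the points that deserve the most attention.
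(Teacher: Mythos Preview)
The paper does not give its own proof of this fact: it is simply quoted as \cite[Theorem 13.4]{OlshanskiiBook1991} and used as a black box. Your argument is correct and is essentially the standard proof one finds in Ol'shanski\u{\i}'s book: build a reduced disk diagram over $\mathcal R\setminus\{R_0\}$ with boundary $R_0$, cap with a single $R_0$-cell to get a sphere, and use conciseness to see that this cell cannot participate in any reducible pair, yielding a nontrivial reduced spherical diagram in contradiction with diagrammatic asphericity.

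One small streamlining: you may as well take $D$ reduced from the outset (van Kampen's lemma provides this), so that after capping, the only possible reducible pairs involve $\Pi_0$; conciseness then shows the spherical diagram is already reduced, and no iterative reduction step is needed. Your more general formulation, tracking $\Pi_0$ through an arbitrary reduction sequence, is also fine, since the labels of the surviving cells other than $\Pi_0$ always stay in the symmetrized closure of $\mathcal R\setminus\{R_0\}$.
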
 

Following \cite{CCH81} and \cite[Sect.\ 6]{Muranov2007},
we say that a group presentation $\GP{A}{\mathcal R}$
is \emph{singularly aspherical} if it is diagrammatically aspherical,
concise, and no element of $\mathcal R$ can be decomposed
as a concatenation of several
copies of the same subword
(i.e.\ does not represent a proper power in the free group
$\GP{A}{\varnothing}$). 

If $\GP{A}{\mathcal R}$ is a singularly aspherical presentation,
then the relation module of $\GP{A}{\mathcal R}$ is a free $G$-module by 
\cite[Corollary 32.1]{OlshanskiiBook1991}.
One can see then, using the fact that all
odd-dimensional homology groups of nontrivial finite cyclic groups
are nontrivial \cite{Brown94}, that this prevents the presence
of torsion. 

\begin{fact}[{\cite[Lemma 64]{Muranov2007}}]
\label{FactGSingAsphericTorsionFree}
Every group with singularly aspherical presentation is torsion-free. 
\end{fact}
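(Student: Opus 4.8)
The plan is to show that a singularly aspherical presentation forces the trivial module $\Z$ to admit a free resolution of length two over the group ring $\Z G$, and then to argue that such a short resolution is incompatible with the presence of torsion. Throughout, write $F$ for the free group on $A$, let $R$ be the normal closure of $\mathcal R$ in $F$ so that $G=F/R$, and let $\bar R=R/[R,R]$ denote the relation module, regarded as a $\Z G$-module via conjugation.

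First I would recall the Crowell exact sequence of $\Z G$-modules attached to the presentation,
$$
0 \to \bar R \xrightarrow{\ d_2\ } (\Z G)^{(A)} \xrightarrow{\ d_1\ } \Z G \xrightarrow{\ \varepsilon\ } \Z \to 0,
$$
where $\varepsilon$ is the augmentation, $d_1$ sends the basis element indexed by $a\in A$ to $a-1$ (so that its image is the augmentation ideal), and $d_2$ identifies $\bar R$ with $\ker d_1$. This sequence is exact for \emph{every} presentation; what singular asphericity contributes is, through \cite[Corollary 32.1]{OlshanskiiBook1991}, that the relation module $\bar R$ is \emph{free}, namely free on the images of the relators, so that $\bar R\cong(\Z G)^{(\mathcal R)}$. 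Substituting this into the Crowell sequence produces a free resolution
$$
0 \to (\Z G)^{(\mathcal R)} \to (\Z G)^{(A)} \to \Z G \to \Z \to 0
$$
of the trivial module of length two; in particular the homological dimension of $G$ over $\Z$ is at most two.

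Next I would show that this bound rules out torsion. Suppose toward a contradiction that $G$ is not torsion-free, and choose a nontrivial element of finite order, generating a finite cyclic subgroup $C\le G$ with $\abs{C}=n>1$. Decomposing $G$ into cosets of $C$ shows that $\Z G$ is free as a $\Z C$-module, so restriction of scalars carries each term of the length-two resolution above to a free, hence projective, $\Z C$-module, while preserving exactness. The restricted sequence is therefore a projective resolution of the trivial $\Z C$-module of length two, whence $H_k(C;\Z)=\Tor_k^{\Z C}(\Z,\Z)=0$ for every $k\ge 3$.

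This is the desired contradiction: by \cite{Brown94} the odd-dimensional homology of a nontrivial finite cyclic group never vanishes, and concretely $H_3(C;\Z)\cong\Z/n\Z\ne\{0\}$. Hence $G$ has no torsion. Since the substantive input, freeness of the relation module, is supplied by \cite[Corollary 32.1]{OlshanskiiBook1991}, the only point in the remaining argument that I expect to require genuine care is the verification that restriction of scalars along $\Z C\hookrightarrow\Z G$ preserves both freeness and exactness; everything downstream is then the standard observation that groups of finite homological dimension are torsion-free.
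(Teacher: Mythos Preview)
Your proof is correct and follows exactly the route sketched in the paper: freeness of the relation module via \cite[Corollary 32.1]{OlshanskiiBook1991} yields a length-two free resolution of $\Z$ over $\Z G$, and restricting to a putative finite cyclic subgroup contradicts the nonvanishing of its odd-dimensional homology \cite{Brown94}. The paper gives only this two-sentence outline (citing \cite[Lemma 64]{Muranov2007} for the full argument), and your write-up supplies precisely the missing details---the Crowell sequence, the coset decomposition showing $\Z G$ is $\Z C$-free, and the explicit $H_3$ computation.
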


Note that Fact \ref{FactGSingAsphericTorsionFree} can also be 
deduced from \cite[Theorem 3]{HUEB79},
where the finite-order elements are classified in the more general case of 
\emph{combinatorially aspherical} groups (defined in \cite{HUEB79,CCH81}). 

We have now all the tools from combinatorial group theory needed to 
build families of finitely presented groups with the desired properties. 

\begin{proof}[Proof of Theorem\/ \textup{\ref{MainTheo}.}]
Consider, for example, the following word $w(x,y)$ in two letters
$x$ and $y$:
$$
w(x,y)=xy^{7}x^{2}y^{6}x^{3}y^{5}x^{4}y^{4}x^{5}y^{3}x^{6}y^{2}x^{7}y.
$$
For an integer $n\geq 1$, let $A$ be a set of $n+2^{n}$ elements 
$a_{1}, \dots , a_{i}, \dots , a_{n}$, and 
$b_{\sigma}$, with $2^{n}$ indices 
$\sigma$ varying in the set of all subsets of $\{1,\dots,n\}$.
Define two sets of relators:
$$
\mathcal R=\{\,w(a_{i},b_{\sigma})\,|\,1\leq i \leq n\,;\,
\sigma\subseteq\{1,\dots,n\}\,;\,
i\in\sigma\,\}$$
and 
$$
\mathcal S=\{\,w(a_{i},b_{\sigma})\,|\,1\leq i \leq n\,;\,
\sigma\subseteq\{1,\dots,n\}\,\}.
$$
Denote by $\tilde{\mathcal R}$ and $\tilde{\mathcal S}$
their respective symmetrizations.
Clearly both $\mathcal R$ and $\mathcal S$ are concise.

Consider now the finitely presented group 
$$
G=\GP{A}{\mathcal R}=\GP{A}{\tilde{\mathcal R}},
$$ 
and denote by $F$ the free group $\GP{A}{\varnothing}$, of rank $n+2^{n}$.
Then $G$ is the quotient of $F$ by the normal closure 
of $\mathcal R$, $\mathcal R$ being viewed as a subset of~$F$.

One can check directly that any piece relative to $\tilde{\mathcal S}$
of any relator from $\tilde{\mathcal S}$ is either of the form 
$(a_i^mb_\sigma^n)^{\pm1}$, or of the form $(b_\sigma^na_i^m)^{\pm1}$. 
Then we deduce that the length of such a piece is at most $8$.
As the length of every relator is $7\times8$, we obtain 
$$
\abs{X}\leq\frac{1}{7}\abs{R}<\frac{1}{6}\abs{R}
$$
for any $R\in\tilde{\mathcal S}$ and any piece $X$ of $R$
relative to $\tilde{\mathcal S}$.
Hence the presentation $\GP{A}{\mathcal S}$ satisfies $C'(1/6)$.
In particular, the subpresentation $\GP{A}{\mathcal R}$ satisfies $C'(1/6)$.

It follows now from Fact \ref{FactC'1/6ImpliesHyp} that
$G$ is hyperbolic.
By Fact \ref{FactC'1/5ImpliesAspherical}, the presentation
$\GP{A}{\mathcal S}$ is diagrammatically aspherical.
As clearly no element of $\mathcal S$ is a proper power in 
the free group $F$,
the presentation $\GP{A}{\mathcal S}$ is in fact
singularly aspherical, and such is the subpresentation
$\GP{A}{\mathcal R}$.
Fact \ref{FactGSingAsphericTorsionFree} implies now
that $G$ is torsion-free. 

Finally, applying Fact \ref{FactAsphericIndep} to $\GP{A}{\mathcal S}$,
we obtain that for all $i\in\{1,\dots,n\}$ and
$\sigma\subseteq\{1,\dots,n\}$,
$$
G\models w(a_{i},b_{\sigma})=1
\quad\text{if and only if}\quad i\in\sigma. 
$$

As $n$ can be any positive integer,
the formula $\ulcorner w(x,y)=1\urcorner$ has the independence property
relative to the class of torsion-free hyperbolic groups. 
The proof of Theorem \ref{MainTheo} is now complete. 
\end{proof}

\section{Concluding remarks}

We conclude with a few remarks.

\subsection{}
The group word $w(x,y)$ built in the proof of Theorem \ref{MainTheo} is in 
two variables and without additional constants.
Hence in Corollary \ref{CorECCSAGpsHaveIP} one finds
directly a formula with the independence property having two single
variables only 
(see \cite[Th\'eor\`eme 12.18]{Poizat(book)85} for general reductions
of this kind), 
and without parameters. 

\subsection{}
As announced in \cite{Sela2007}, a torsion-free hyperbolic group is stable, 
and hence cannot have the independence property.
Hence one cannot imagine a version of 
Theorem \ref{MainTheo} where the class of groups would consist of 
groups elementarily equivalent to a given torsion-free hyperbolic group, 
or more generally to a fixed \emph{finite}
set of torsion-free hyperbolic groups. 
Our proof provides however a countable set of torsion-free
hyperbolic groups. 

\subsection{}
The proof of Theorem \ref{MainTheo} actually provides a group word
$w(x,y)$ such 
that the formula $\ulcorner w(x,y)=1\urcorner$
has the independence property relative to 
the class of torsion-free finitely presented $C'(1/6)$-groups,
a class significantly smaller than that of all
torsion-free hyperbolic groups. 
Choosing $w$ long enough, one can similarly produce formulas having the 
independence property relative to the class of
torsion-free finitely presented $C'(\lambda)$-groups
with $\lambda$ arbitrarily small. 
Indeed, if one denotes by 
$P_{n}$ the ``probability'' that a cyclically reduced group word $w(x,y)$ 
of length $n\ge1$ gives the independence property relative to 
the class of torsion-free $C'(\lambda)$-groups 
(with the formula $\ulcorner w(x,y)=1\urcorner$),
then one can check in a rather straightforward manner
the rough estimate
$$
P_{n}\ge1-\frac{n^2}{2^{\lambda n}}-\frac{4n}{2^{\lambda n}},
$$
and hence $P_{n}$ tends rapidly to $1$ as $n$ tends to the infinity. 
This estimate can be found by verifying that the ``probability'' of 
occurrence of a subword of a cyclic shift of $w$ of length $\lceil\lambda n\rceil$ 
in two given distinct ``positions'' with respect to $w$ 
is not greater than $1/2^{\lambda n}$, and that the
``probability'' that a cyclic 
shift of $w$ contains a syllable of length $\lceil\lambda n\rceil$
is less than ${4n}/{2^{\lambda n}}$.
For example, the probability that in a cyclically reduced
group word $w(x,y)$ of length at least $6$
the same word of length $4$ occures as a subword
starting from the first letter and also starting from third letter,
is at most $1/(3\cdot3\cdot3\cdot2)=1/54$; indeed,
once all the letters of $w$ except the first $4$ are fixed, there
is at most one way to complete $w$ so as to obtain
a word whose initial subword of length $4$ occures again starting
from the third letter,
but there are at least $3\cdot3\cdot3\cdot2$
ways to complete it to obtain a cyclically reduced group word.

Roughly speaking, for any fixed $\lambda>0$,
by choosing an arbitrary
but sufficiently long group word $w$, one will ``most probably''
obtain the formula $\ulcorner w(x,y)=1\urcorner$ with
the independence property relative to the class of torsion-free
finitely presented $C'(\lambda)$-groups. 

\subsection{}
Another property of a formula witnessing the instability of a 
first-order theory, orthogonal in some sense to 
the independence property, is the \emph{strict order property}.
One may wonder whether the first-order theory
$\Th(G)$ of an existentially closed $CSA$-group $G$ has this property. 
We provide here some speculations on this question, 
again for formulas $\phi(x,y)$ of the 
form $\ulcorner w(x,y)=1\urcorner$ for some group word $w$,
and we concentrate on weaker 
properties such as the \emph{$n$-strong order property $\SOP_{n}$}
defined in \cite[Definition 2.5]{Shelah1996} for $n\geq3$. 
We refer to \cite[Sect.\ 2]{Shelah1996} for a general discussion
of these properties, and we just recall the implications
$$
\begin{array}{rcll}
\text{strict order property} & \implies & \cdots \\
& \implies & \SOP_{n+1}\\
& \implies & \SOP_{n}\\ 
& \implies & \cdots\\
& \implies & \SOP_{3} & \implies \text{non-simplicity} \\
\end{array}
$$
If $w$ is a ``long'' and ``arbitrarily chosen'' cyclically reduced
group word in two letters $x$ and $y$ (see above),
then ``most probably'' the formula
$\ulcorner w(x,y)=1\urcorner$ will not exemplify $\SOP_{n}$. 
The reason is that essentially 
the same kind of arguments used in the proof of 
Theorem \ref{MainTheo} provide torsion-free $C'(1/6)$-groups 
generated by elements $a_{1}$, \dots, $a_{n}$ in which 
$w(a_{i},a_{j})=1$ if and only if $j=i+1$ modulo $n$,
and thus the graph defined on an existentially closed $CSA_{f}$-group
by the formula $\ulcorner w(x,y)=1\urcorner$ contains cycles of size $n$,
contrary to one of the two requirements 
for the condition $\SOP_{n}$
(the other one being the existence in some model
of an infinite chain in this graph).
Hence one may be tempted to look at ``short'' words. 
As usual in our context of commutative transitive groups,
the very short word $[x,y]$ witnessing the commutation 
of $x$ and $y$ boils down to an equivalence relation, and hence is useless. 
Incidentally, the formula $\phi(x,y)$ used 
in \cite[Proposition 4.1]{ShelahUsvyatsov} to 
prove that Group Theory ``in general" has $\SOP_{3}$ is 
$$
(xyx^{-1}=y^{2}) \wedge (x\neq y).
$$
In our context of $CSA$-groups, it implies $y=1$. 
Hence the absence of certain triangles $(a_{1},a_{2},a_{3})$ satisfying 
$$
\phi(a_{1},a_{2}) \wedge \phi(a_{2},a_{3}) \wedge \phi(a_{3},a_{1})
$$
(provided by \cite[p. 493]{Stallings1991} in the context of
arbitrary groups) is immediate in our case,
but one cannot hope to find an infinite chain in the graph 
associated to $\phi(x,y)$.
Hence this formula could not exemplify the $\SOP_{3}$
in our context of $CSA$-groups. 
This means that a formula examplifying the $\SOP_{n}$ of
an existentially closed $CSA$-group, if it exists,
cannot involve only an ``arbitrarily chosen long'' equation,
and it does not seem to involve only ``short" equations. 

\section*{Acknowledgments}

We thank Abderezak Ould Houcine who pointed out fallacies
in preliminary attempts towards the present work. 


\def\cprime{$'$} \providecommand{\href}[2]{#2}

\end{document}